\newtheorem{thm}{Theorem}%[section]
\newtheorem{corr}[thm]{Corollary}
\newtheorem{lem}[thm]{Lemma}
\newtheorem{prop}[thm]{Proposition}
\newtheorem{claim}[thm]{Claim}
\theoremstyle{definition}
\newtheorem{rem}[thm]{Remark}%[section]
\def\R{\mathbb R}
\def\SS{\mathbb S}
\def\f{\frac}
\def\ra{\rightarrow}
\def\pt{\partial}
\begin{document}
\title[Note on Brendle-Eichmair's paper]{Note on Brendle-Eichmair's paper ``Isoperimetric and Weingarten surfaces in the Schwarchild manifold''}
\author{Haizhong Li}
\address{Department of mathematical sciences, and Mathematical Sciences
Center, Tsinghua University, 100084, Beijing, P. R. China}
\email{hli@math.tsinghua.edu.cn}
\author{Yong Wei}
\address{Department of mathematical sciences, Tsinghua University, 100084, Beijing, P. R. China}
\email{wei-y09@mails.tsinghua.edu.cn}
\author{Changwei Xiong}
\address{Department of mathematical sciences, Tsinghua University, 100084, Beijing, P. R. China}
\email{xiongcw10@mails.tsinghua.edu.cn}
\date{\today}
\thanks{The research of the authors was supported by NSFC No. 11271214.}
%\subjclass[2010]{{53C42}, {53C21}}
%\keywords{}

\maketitle

\begin{abstract}
In this short note, we show that the assumption ``convex'' in Theorem $7$ of Brendle-Eichmair's paper \cite{BE} is unnecessary.
\end{abstract}

\section{Introduction}
For $n\geq 3$, let $\lambda:[0,\bar{r})\ra \R$ be a smooth positive function which satisfies the following conditions (see \cite{BE}):
\begin{itemize}
  \item[(H1)] $\lambda'(0)=0$ and $\lambda''(0)>0$.
  \item[(H2)] $\lambda'(r)>0$ for all $r\in(0,\bar{r})$.
  \item[(H3)] The function $$2\f{\lambda''(r)}{\lambda(r)}-(n-2)\f{1-\lambda'(r)^2}{\lambda(r)^2}$$ is non-decreasing for $r\in(0,\bar{r})$.
  \item[(H4)] $\f{\lambda''(r)}{\lambda(r)}+\f{1-\lambda'(r)^2}{\lambda(r)^2}>0$ for all $r\in (0,\bar{r})$.
\end{itemize}

Now we consider the manifold $M=\SS^{n-1}\times [0,\bar{r})$ equipped with a Riemannian metric $\bar{g}=dr\otimes dr+\lambda(r)^2 g_{\SS^{n-1}}$.
Let $\Sigma$ be a closed embedded star-shaped hypersurface in $(M,\bar{g})$, where star-shaped means that the unit outward normal $\nu$ satisfies $\langle \pt_r,\nu\rangle\geq 0$. Denote by $\sigma_p$ the $p$-th elementary symmetric polynomial of the principal curvatures. In fact, for this manifold $(M,\bar{g})$ Brendle and Eichmair proved the following theorem
\begin{thm}[Theorem 7 of \cite{BE}]\label{thm-1}
Let $\Sigma$ be a closed embedded hypersurface in the manifold $(M,\bar{g})$ that is star-shaped and convex. If $\sigma_p$ is constant, then $\Sigma$ is a slice $\SS^{n-1}\times \{r\}$ for some $r\in (0,\bar{r})$.
\end{thm}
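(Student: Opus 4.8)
The plan is to show that the hypotheses already force $\Sigma$ to be \emph{$p$-convex}, i.e. that its principal curvature vector $\kappa=(\kappa_1,\dots,\kappa_{n-1})$ lies everywhere in the Garding cone $\Gamma_p$ — the connected component of $\{x\in\R^{n-1}:\sigma_p(x)>0\}$ containing $(1,\dots,1)$, which is open, convex, contained in $\{\sigma_1>0\}$, and has boundary inside $\{\sigma_p=0\}$ — and then to run the Minkowski-formula and Heintze--Karcher argument of \cite{BE}, which uses only $p$-convexity rather than full convexity.

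First I would examine a point $q\in\Sigma$ at which the radial coordinate $r$ attains its maximum $r_0$; since $\Sigma$ is closed, $r_0\in(0,\bar r)$. Because $r|_\Sigma$ has a maximum at $q$ and $\langle\pt_r,\nu\rangle\ge0$, the outward unit normal at $q$ must be $\pt_r$, and near $q$ the hypersurface $\Sigma$ lies inside the coordinate ball $\{r\le r_0\}$, whose boundary is the slice $\SS^{n-1}\times\{r_0\}$. Comparing the second fundamental forms at this one-sided contact point gives $h_{ij}(q)\ge\frac{\lambda'(r_0)}{\lambda(r_0)}\,\bar g_{ij}(q)$, so by (H2) every principal curvature of $\Sigma$ at $q$ is positive. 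Hence $\kappa(q)\in\Gamma_{n-1}\subseteq\Gamma_p$ and $\sigma_p(q)>0$; as $\sigma_p$ is constant, $\sigma_p\equiv c$ for some $c>0$.

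Next I would globalize this by a connectedness argument. The set $A=\{x\in\Sigma:\kappa(x)\in\Gamma_p\}$ is open and, by the previous step, nonempty; it is also closed, because if $x_j\to x_\infty$ with $\kappa(x_j)\in\Gamma_p$ then $\kappa(x_\infty)\in\overline{\Gamma_p}$, and $\kappa(x_\infty)$ cannot lie on $\partial\Gamma_p\subseteq\{\sigma_p=0\}$ since $\sigma_p(\kappa(x_\infty))=c>0$. As $\Sigma$ is connected, $A=\Sigma$. In particular the mean curvature $\sigma_1$ is positive everywhere on $\Sigma$ and the Newton--MacLaurin inequalities hold pointwise; writing $p_k=\sigma_k/\binom{n-1}{k}$, I would use $\sigma_1\ge(n-1)p_p^{1/p}$ and $p_{p-1}\ge p_p^{(p-1)/p}$, with $p_p\equiv c/\binom{n-1}{p}$ constant.

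With $p$-convexity available, the remainder follows \cite{BE}: one combines (i) Brendle's Heintze--Karcher inequality $\int_\Sigma\frac{\lambda'}{\sigma_1}\,dA\ge\frac{n}{n-1}\int_\Omega\lambda'\,dV$, valid since $\sigma_1>0$ and with equality precisely for slices; (ii) the Minkowski formula $\int_\Sigma\lambda'\,p_{p-1}\,dA=\int_\Sigma p_p\,\langle\lambda\pt_r,\nu\rangle\,dA$; and (iii) the identity $\int_\Sigma\langle\lambda\pt_r,\nu\rangle\,dA=n\int_\Omega\lambda'\,dV$, obtained from the divergence theorem for the conformal vector field $\lambda\pt_r$. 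Feeding the two Newton--MacLaurin inequalities into (i) and into (ii)--(iii) yields both $n\,p_p^{1/p}\int_\Omega\lambda'\,dV\le\int_\Sigma\lambda'\,dA$ and the reverse inequality, so equality holds throughout; equality in the Heintze--Karcher inequality then forces $\Sigma=\SS^{n-1}\times\{r\}$ for some $r$. I expect the only genuinely new input compared with \cite{BE} to be the first two steps, namely extracting global $p$-convexity from the mere constancy of $\sigma_p$; the delicate point there is the closedness of $A$, which rests on the elementary fact that $\Gamma_p$, being a connected component of $\{\sigma_p>0\}$, has its boundary contained in $\{\sigma_p=0\}$, while the maximum-point comparison only needs to furnish the single seed point that starts the continuity argument.
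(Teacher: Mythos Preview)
Your approach is essentially the paper's own: it proves the stronger Theorem~2 (the present statement with the convexity hypothesis removed) by first locating an elliptic point at the maximum of $r$, then using a G{\aa}rding--cone connectedness argument to propagate $\vec\kappa\in\Gamma_p^+$ globally, and finally running the Heintze--Karcher/Minkowski combination from \cite{BE}. The seed-point and continuity steps you single out as ``the only genuinely new input'' are exactly the paper's Lemma~4 and Lemma~6.

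One correction is needed in your item~(ii). The Hsiung--Minkowski relation is \emph{not} an identity in $(M,\bar g)$ for $p\ge 2$: the Newton tensor $T^{(p)}$ fails to be divergence-free because the ambient space is not a space form. What actually holds is the inequality
\[
\int_\Sigma p_p\,\langle\lambda\partial_r,\nu\rangle \;\ge\; \int_\Sigma \lambda'\,p_{p-1},
\]
and proving even this requires controlling the curvature correction $-\tfrac{n-p}{n-2}\sum_j\sigma_{p-2;j}(\vec\kappa)\,\xi_j\,\mathrm{Ric}(e_j,\nu)$ with $\xi=X^\top$; here star-shapedness, condition~(H4), and the positivity of $\sigma_{p-2;j}(\vec\kappa)$ (which follows from $\vec\kappa\in\Gamma_{p-1}^+$, a consequence of your Step~2) are all used. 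Since your final chain of estimates only invokes (ii) in the direction $\ge$, the argument survives once the claimed equality is replaced by this inequality together with its justification.
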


In this note, we show that the assumption `` convex'' in Theorem \ref{thm-1} is unnecessary. That is we have
\begin{thm}\label{thm-2}
Let $\Sigma$ be a closed, embedded and star-shaped hypersurface in the manifold $(M,\bar{g})$. If $\sigma_p$ is constant, then $\Sigma$ is a slice $\SS^{n-1}\times \{r\}$ for some $r\in (0,\bar{r})$.
\end{thm}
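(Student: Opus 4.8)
The plan is to show that the hypotheses of Theorem~\ref{thm-2} already force $\Sigma$ to lie in the Gårding cone $\Gamma_p$; once this is known, the convexity assumption in Theorem~\ref{thm-1} becomes redundant, since it is used there only to place $\Sigma$ in $\Gamma_p$, and one may invoke Brendle--Eichmair's proof verbatim. First I would fix notation as in \cite{BE}: let $\Phi(r)=\int_0^r\lambda(s)\,ds$, so that $X:=\bar\nabla\Phi=\lambda(r)\,\pt_r$ is a conformal vector field with $\bar\nabla X=\lambda'(r)\,\bar g$, write $u=\langle X,\nu\rangle\geq 0$ (by star-shapedness), and let $c$ denote the constant value of $\sigma_p$ on $\Sigma$. \textbf{Step 1: $c>0$ and $\Sigma$ meets the positive cone.} Pick $x_0\in\Sigma$ where the radial function $r|_\Sigma$ attains its maximum. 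Then $\Sigma$ is tangent at $x_0$ to the slice $\SS^{n-1}\times\{r(x_0)\}$, lies on the side $\{r\leq r(x_0)\}$ near $x_0$, and has outward normal $\pt_r$ there; the standard comparison of second fundamental forms for a hypersurface touching a slice from inside gives that the shape operator of $\Sigma$ at $x_0$ is $\geq\frac{\lambda'(r(x_0))}{\lambda(r(x_0))}\,\mathrm{Id}$, which is positive definite by (H2). Hence all principal curvatures of $\Sigma$ at $x_0$ are positive, so in particular $c=\sigma_p(x_0)>0$, and the eigenvalue vector of the shape operator at $x_0$ lies in the open positive cone, which is contained in $\Gamma_p$.

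\textbf{Step 2: $\Sigma\subset\Gamma_p$.} Since the second fundamental form of $\Sigma$ is smooth, its (unordered) eigenvalue vector defines a continuous map from the connected manifold $\Sigma$ into $\R^{n-1}$. Because $\sigma_p\equiv c>0$, this map takes values in the open set $\{\sigma_p>0\}$, so its image is connected and, by Step~1, meets $\Gamma_p$. As $\Gamma_p$ is by definition a connected component of $\{\sigma_p>0\}$, the whole image lies in $\Gamma_p$. Thus $\Sigma\in\Gamma_p$; in particular $\sigma_1,\dots,\sigma_p$ are all positive on $\Sigma$ and the Newton--MacLaurin inequalities among them hold pointwise.

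\textbf{Step 3: conclusion via \cite{BE}.} Now $\Sigma$ is a closed, embedded, star-shaped hypersurface lying in $\Gamma_p$ with $\sigma_p$ constant, which is exactly the setting in which Theorem~\ref{thm-1} is proved once ``convex'' is replaced by the weaker ``$\Sigma\in\Gamma_p$''. Concretely, one combines the Minkowski-type formula $\int_\Sigma\big((n-p)\lambda'\sigma_{p-1}-p\,u\,\sigma_p\big)\,dA=E_p$, whose error term $E_p$ has the favorable sign by (H3)--(H4), with the Heintze--Karcher-type inequality $\int_\Sigma\lambda'\tfrac{\sigma_{p-1}}{\sigma_p}\,dA\geq\tfrac{p}{n-p}\int_\Sigma u\,dA$ of \cite{BE}; substituting $\sigma_p\equiv c$ into both and comparing forces the Heintze--Karcher inequality to be an equality. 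Brendle--Eichmair's equality analysis (which is where (H1)--(H4) enter decisively) then yields $\Sigma=\SS^{n-1}\times\{r\}$ for some $r\in(0,\bar r)$.

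The step that needs the most care is Step~2, and with it the bookkeeping in Step~3: one must state precisely that $\Gamma_p$ is a connected component of $\{\sigma_p>0\}$, so that a continuous image of the connected set $\Sigma$ meeting $\Gamma_p$ is contained in $\Gamma_p$, and one must verify that the convexity hypothesis enters the proof of Theorem~\ref{thm-1} \emph{only} through the conclusion $\Sigma\in\Gamma_p$ (so that Step~3 is genuinely a citation and not a reproof). The remaining ingredients---the interior-tangency comparison of Step~1 and the Minkowski and Heintze--Karcher machinery of Step~3---are either elementary or already established in \cite{BE}.
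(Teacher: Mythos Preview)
Your overall strategy matches the paper's: locate an elliptic point at the maximum of $r|_\Sigma$, use it together with $\sigma_p\equiv c>0$ to force $\vec\kappa\in\Gamma_p^+$ everywhere, and then feed this into the Brendle--Eichmair machinery. Your Step~2 is in fact cleaner than the paper's: you use directly that $\Gamma_p^+$ is a connected component of $\{\sigma_p>0\}$, so a continuous eigenvalue map from the connected $\Sigma$ into $\{\sigma_p>0\}$ that hits the positive cone must land entirely in $\Gamma_p^+$. The paper instead proves a slightly stronger lemma (its Lemma~\ref{lem-2}: $\sigma_p>0$ pointwise, not necessarily constant, already implies $\sigma_j>0$ for $j<p$) via a more involved open--closed argument with the Newton--MacLaurin inequalities; for the theorem at hand your shortcut suffices and is more transparent.

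The one place where your write-up is thinner than the paper is Step~3. You treat it as a citation, but in \cite{BE} convexity is used not merely to land in $\Gamma_p^+$ but specifically to guarantee $\sigma_{p-2;j}(\vec\kappa)>0$ in the divergence identity for $T^{(p)}$, which is what gives the Minkowski-type inequality \eqref{eq-M} its sign. The paper therefore re-derives \eqref{eq-M} explicitly, invoking the standard fact (its Lemma~\ref{lem-3}) that $\vec\kappa\in\Gamma_{p-1}^+$ implies $\sigma_{p-2;j}>0$. Your caveat that ``one must verify that convexity enters only through $\Sigma\in\Gamma_p$'' is exactly right, and this verification is the substance of the paper's Proposition~8; you should carry it out rather than defer it. Once that is done, the Heintze--Karcher inequality you quote follows from Brendle's $(n-1)\int_\Sigma\lambda'/H\geq\int_\Sigma u$ together with the Newton inequalities in $\Gamma_p^+$, and the equality analysis proceeds as in \cite{BE}.
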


Note that the conditions (H1)-(H4) are all satisfied on the deSitter-Schwarzschild manifolds (see \cite{BE}). So we have the following Corollary
\begin{corr}
Let $\Sigma$ be a closed, embedded and star-shaped hypersurface in the deSitter-Schwarzschild manifold $(M,\bar{g})$. If $\sigma_p$ is constant, then $\Sigma$ is a slice $\SS^{n-1}\times \{r\}$ for some $r\in (0,\bar{r})$.
\end{corr}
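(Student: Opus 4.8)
The plan is to keep the argument of Brendle and Eichmair essentially intact and to notice that their convexity hypothesis is used at a single place: in the proof of Theorem~\ref{thm-1}, convexity serves only to guarantee that the principal curvature vector of $\Sigma$ lies in the G{\aa}rding cone $\Gamma_p=\{\mu\in\R^{n-1}:\sigma_1(\mu)>0,\dots,\sigma_p(\mu)>0\}$ at every point (convexity being the stronger condition $\Gamma_{n-1}\subseteq\Gamma_p$). This membership is what makes available Brendle's Heintze--Karcher inequality in $(M,\bar g)$, whose hypothesis is just $\sigma_1>0$, together with the Newton--MacLaurin inequalities for $\sigma_1,\dots,\sigma_p$. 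The other ingredients---the Minkowski-type integral formula in $(M,\bar g)$, and the fact that a closed totally umbilic hypersurface in $(M,\bar g)$ is a slice---hold for arbitrary hypersurfaces and use (H1)--(H4) but not convexity. Thus everything comes down to the following.

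\emph{Key step.} If $\Sigma\subset(M,\bar g)$ is closed, connected, embedded and star-shaped, and $\sigma_p$ is constant on $\Sigma$, then that constant is positive and the principal curvature vector of $\Sigma$ lies in $\Gamma_p$ at every point; in particular $\sigma_1>0$ on $\Sigma$.

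To prove the key step I would write $\Sigma$ as a radial graph $r=\rho(\xi)$ over $\SS^{n-1}$ and examine a point $q$ at which $\rho$ attains its maximum. There $d\rho(q)=0$ and $\nabla^2\rho(q)\le 0$, so a standard computation of the shape operator of a graph gives, at $q$, each principal curvature $\ge \lambda'(\rho(q))/\lambda(\rho(q))>0$, using (H2) and $\lambda>0$. Hence the principal curvature vector at $q$ lies in the positive cone $\Gamma_{n-1}\subseteq\Gamma_p$, so $\sigma_p(q)>0$; as $\sigma_p$ is constant, $\sigma_p>0$ on all of $\Sigma$. Now invoke the standard fact that $\Gamma_p$ is exactly the connected component of the open set $\{\sigma_p>0\}\subseteq\R^{n-1}$ that contains the positive cone, so that $\partial\Gamma_p\subseteq\{\sigma_p=0\}$. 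The set $A=\{x\in\Sigma:(\kappa_1(x),\dots,\kappa_{n-1}(x))\in\Gamma_p\}$ is open and non-empty (it contains $q$), and it is also closed in $\Sigma$: a point of $\Sigma$ in $\partial A$ would have its principal curvature vector in $\partial\Gamma_p\subseteq\{\sigma_p=0\}$, contradicting $\sigma_p>0$. Since $\Sigma$ is connected, $A=\Sigma$, which is the key step. (If $\Sigma$ is disconnected, star-shapedness forces every component other than an outermost graph to have $\langle\pt_r,\nu\rangle\equiv 0$, hence to be a slice already, so one may reduce to the connected case.)

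Granting the key step, I would finish as in \cite{BE}. Put $p_j=\sigma_j/\binom{n-1}{j}$ (so $p_0=1$) and note that $p_p$ is a positive constant on $\Sigma$. Let $X=\lambda(r)\,\pt_r$, a closed conformal vector field with $\bar\nabla X=\lambda'(r)\,\bar g$, and write $u=\langle X,\nu\rangle\ge 0$. The Minkowski-type formula of \cite{BE} gives $\int_\Sigma\bigl(\lambda'(r)\,p_{p-1}-u\,p_p\bigr)\,d\mu=0$, hence $\int_\Sigma\lambda'(r)\,p_{p-1}\,d\mu=p_p\int_\Sigma u\,d\mu$. Since $\sigma_1>0$ by the key step, Brendle's Heintze--Karcher inequality in $(M,\bar g)$ applies and yields $\int_\Sigma\frac{\lambda'(r)}{p_1}\,d\mu\ge\int_\Sigma u\,d\mu$, with equality if and only if $\Sigma$ is totally umbilic. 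On the other hand, since $\Sigma\in\Gamma_p$ the Newton--MacLaurin inequalities $p_k^2\ge p_{k-1}p_{k+1}$ give $p_1=p_1/p_0\ge p_p/p_{p-1}$, i.e.\ $1/p_1\le p_{p-1}/p_p$, so that $\int_\Sigma\frac{\lambda'(r)}{p_1}\,d\mu\le p_p^{-1}\int_\Sigma\lambda'(r)\,p_{p-1}\,d\mu=\int_\Sigma u\,d\mu$. Comparing the two bounds forces equality throughout, so $\Sigma$ is totally umbilic, and therefore a slice $\SS^{n-1}\times\{r\}$ by the umbilic rigidity in $(M,\bar g)$. This proves Theorem~\ref{thm-2}; the Corollary follows at once, since the deSitter--Schwarzschild metrics satisfy (H1)--(H4).

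I expect the key step to be where the real work lies: promoting ``star-shaped and $\sigma_p$ constant'' to ``$\Sigma\in\Gamma_p$''. The delicate point is that $\{\sigma_p>0\}\subseteq\R^{n-1}$ has several connected components, so positivity of $\sigma_p$ by itself is far from enough to locate the principal curvatures in the correct cone; it is the maximum point of $r$ that pins down the right component, and this is exactly where star-shapedness takes over the role played by convexity in \cite{BE}.
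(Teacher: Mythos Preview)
Your strategy is the paper's: find an elliptic point, run a connectedness argument to upgrade ``$\sigma_p$ constant'' to ``$\vec\kappa\in\Gamma_p^+$ everywhere'', then feed this into the Heintze--Karcher/Minkowski/Newton--MacLaurin chain from \cite{BE}. Your open--closed argument for the Key step (via $\partial\Gamma_p\subset\{\sigma_p=0\}$) is in fact a little slicker than the paper's version, which builds the sets $\mathcal G_j$ and proves $\mathcal G_{j+1}\subset\mathcal G_j$ inductively.

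One genuine slip, however: your claim that the Minkowski-type formula ``holds for arbitrary hypersurfaces'' and is an \emph{equality} is wrong in these warped products. When $(M,\bar g)$ is not a space form the Newton tensor $T^{(p)}$ is not divergence-free; integrating the Hsiung identity leaves a residual term
\[
-\frac{n-p}{n-2}\sum_{j}\sigma_{p-2;j}(\vec\kappa)\,\xi_j\,\mathrm{Ric}(e_j,\nu),
\]
which has a sign only under (H4), star-shapedness, \emph{and} $\sigma_{p-2;j}(\vec\kappa)>0$, i.e.\ $\vec\kappa\in\Gamma_{p-1}^+$. So the Minkowski step is a second place where \cite{BE} used convexity, and your Key step is needed there too (the paper re-proves this as its Proposition~8). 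Fortunately the resulting \emph{inequality} points the right way, $\int_\Sigma\lambda' p_{p-1}\le p_p\int_\Sigma u$, so your final sandwich $\int_\Sigma u\le\int_\Sigma\lambda'/p_1\le p_p^{-1}\int_\Sigma\lambda' p_{p-1}\le\int_\Sigma u$ still closes up and forces equality in Heintze--Karcher.
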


\section{Proof of Theorem \ref{thm-2}}

In this section, by observing the existence of an elliptic point on $\Sigma$ and some basic facts about the function $\sigma_p$, we can remove the assumption ``convex''.

Let $X=\lambda(r)\pt_r$. It is easy to see that $X$ is a conformal vector field satisfying $\bar{\nabla}X=\lambda'\bar{g}$. Following the argument as Lemma 5.3 in \cite{AC}, we have
%we rewrite the metric $\bar{g}$ as $\bar{g}=dr^2+\lambda^2(r)g_{\SS^{n-1}}$, where $\lambda(r)$ is a positive function satisfying
%\begin{equation*}
%    \lambda'=\sqrt{1-m\lambda^{2-n}-\kappa \lambda^2}.
%\end{equation*}
%Let $X=\lambda(r)\pt_r$, then it can be easily seen that $X$ is a conformal vector field satisfying $\bar{\nabla}X=\lambda'\bar{g}$. For a closed hypersurface $\Sigma$ in $M$, we have the following Lemma.
\begin{lem}\label{lem-1}
Let $\psi:\Sigma\ra (M,\bar{g})$ be a closed  hypersurface. Then there exists an elliptic point $x$ on $\Sigma$, i.e., all the principal curvatures are positive at $x$.
\end{lem}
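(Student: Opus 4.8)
The plan is to produce the elliptic point as a maximum point of a radial ``height'' function on $\Sigma$, exploiting the gradient conformal field $X=\lambda(r)\,\pt_r$. Since $|\pt_r|_{\bar{g}}=1$ and $\pt_r$ is $\bar{g}$-orthogonal to the slices, we have $\pt_r=\bar{\nabla}r$; hence, setting $\Lambda(r):=\int_0^r\lambda(s)\,ds$, we get $\bar{\nabla}\Lambda=\Lambda'(r)\,\bar{\nabla}r=\lambda(r)\,\pt_r=X$, and therefore $\bar{\nabla}^2\Lambda=\bar{\nabla}X=\lambda'\,\bar{g}$. First I would set $u=\Lambda\circ\psi$, a smooth function on the closed manifold $\Sigma$, and choose $x\in\Sigma$ to be a point at which $u$ attains its maximum; this $x$ will be the elliptic point.

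Then I would exploit the first- and second-order conditions at $x$. At first order, the tangential part of $\bar{\nabla}\Lambda=X$ vanishes at $x$; since $X=\lambda(r)\,\pt_r$ with $\lambda(r(x))>0$, this forces $\pt_r$ to be normal to $\Sigma$ at $x$, so, choosing the unit normal $\nu$ with $\langle\pt_r,\nu\rangle>0$, we have $\nu=\pt_r$ and $X=\lambda(r(x))\,\nu$ at $x$. At second order, $\mathrm{Hess}_{\Sigma}u\le 0$ at $x$. I would then apply the standard identity relating the Hessian of a restricted function to the ambient one,
\[
\mathrm{Hess}_{\Sigma}(\bar{f}\circ\psi)=\bar{\nabla}^2\bar{f}|_{T\Sigma}-\langle\bar{\nabla}\bar{f},\nu\rangle\,h ,
\]
where $h$ is the second fundamental form of $\Sigma$ with respect to $\nu$, to the function $\bar{f}=\Lambda$. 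Using $\bar{\nabla}^2\Lambda=\lambda'\bar{g}$ and $\langle\bar{\nabla}\Lambda,\nu\rangle=\langle X,\nu\rangle=\lambda(r(x))$ at $x$, this yields, for every tangent vector $V$ at $x$,
\[
0\ \ge\ \mathrm{Hess}_{\Sigma}u(V,V)=\lambda'(r(x))\,|V|^2-\lambda(r(x))\,h(V,V),
\]
so $h(V,V)\ge\frac{\lambda'(r(x))}{\lambda(r(x))}\,|V|^2>0$ for every $V\neq 0$, the strict inequality being exactly hypothesis (H2). Hence every principal curvature of $\Sigma$ at $x$ is at least $\lambda'(r(x))/\lambda(r(x))>0$, and $x$ is an elliptic point.

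I do not expect any serious obstacle; the only points requiring care are the two facts used above, $\lambda(r(x))>0$ and $\lambda'(r(x))>0$, which is where the structure of $(M,\bar{g})$ enters. If $\lambda(0)=0$, then $\{r=0\}$ is a single point and $r(x)=\max_{\Sigma}r>0$ automatically; if $\lambda(0)>0$, then $\lambda>0$ everywhere, and $r(x)>0$ unless $\Sigma$ is the slice $\{r=0\}$ --- the one closed hypersurface that is minimal and carries no elliptic point, and which is not among the hypersurfaces under consideration. Once $r(x)>0$, (H2) gives $\lambda'(r(x))>0$. The choice of $\nu$ with $\langle\pt_r,\nu\rangle>0$ at $x$ is harmless and is, in particular, the orientation used for the star-shaped hypersurfaces in Theorem \ref{thm-2}. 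This is the argument of Lemma 5.3 of \cite{AC} transcribed to the warped product $(M,\bar{g})$.
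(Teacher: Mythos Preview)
Your proof is correct and is essentially the same argument as the paper's: locate the elliptic point at the maximum of the radial height, use the conformal field $X=\lambda\,\pt_r$ to identify the Hessian, and conclude $\kappa_i\ge\lambda'/\lambda>0$ there. The only cosmetic difference is that the paper works directly with $h=r$ and computes $\nabla^2 h$ by hand, whereas you pass to the primitive $\Lambda$ (same maximum point, since $\Lambda'=\lambda>0$) so that $\bar{\nabla}^2\Lambda=\lambda'\bar g$ is immediate; your handling of the edge case $r(x)=0$ is in fact more careful than the paper's.
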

\proof
Let $h=\pi_I\circ\psi:\Sigma\ra I$ be the height function on $\Sigma$, where $\pi_I$ is the projection $\pi_I(r,\theta)=r$. At any point $x\in\Sigma$, we have
\begin{equation}
    \nabla h=(\bar{\nabla}r)^{\top}=(\pt_r)^{\top}.
\end{equation}
Let $\{e_1,\cdots,e_{n-1}\}$ be a local orthonormal frame on $\Sigma$, and assume that the second fundamental form $h_{ij}=\langle\bar{\nabla}_{e_i}\nu,e_j\rangle$ is diagonal with eigenvalues $\kappa_1,\cdots,\kappa_{n-1}$. Then
\begin{eqnarray}
\nabla_{e_i}\nabla h &=& \nabla_{e_i}(\frac 1{\lambda(h)}\lambda(h)\pt_r^{\top})\nonumber \\
&=&-\frac{\lambda'}{\lambda}(\nabla_{e_i} h)\pt_r^{\top}+\frac 1{\lambda}\nabla_{e_i}(\lambda\pt_r^{\top}).\label{eq-1}
\end{eqnarray}
Note that $X=\lambda\pt_r$ is a conformal vector field, we have
\begin{align}
    \nabla_{e_i}(\lambda\pt_r^{\top})=&\nabla_{e_i}(\lambda\pt_r-\langle\lambda\pt_r,\nu\rangle\nu)\nonumber\\
    =&(\bar{\nabla}_{e_i}(\lambda\pt_r-\langle\lambda\pt_r,\nu\rangle\nu))^{\top}\nonumber\\
    =&\lambda'e_i-\langle\lambda\pt_r,\nu\rangle\kappa_ie_i.\label{eq-2}
\end{align}
Substituting \eqref{eq-2} into \eqref{eq-1} gives that
\begin{equation}\label{eq-3}
   \nabla_{e_i}\nabla h=-\frac{\lambda'}{\lambda}(\nabla_{e_i} h)\pt_r^{\top}+\frac{1}{\lambda}(\lambda'-\langle\lambda\pt_r,\nu\rangle\kappa_i)e_i.
\end{equation}
Now we consider the maximum point $x$ of $h$. We have $\nabla h=0, \nu=\pt_r$ and $\nabla^2h\leq 0$ at $x$. Then from \eqref{eq-3}, we get
\begin{equation*}
    \kappa_i\geq \frac{\lambda'}{\lambda}>0,\qquad i=1,\cdots,n-1,
\end{equation*}
i.e., $x$ is an elliptic point of $\Sigma$.
\endproof

\begin{rem}
If we assume that the closed embedded hypersurface $\Sigma$ in $M$ satisfies $\langle\pt_r,\nu\rangle>0$, then $\Sigma$ can be parametrized by a graph on $\SS^{n-1}$ (see \cite{BHW}):
\begin{equation*}
    \Sigma=\{(r(\theta),\theta):\theta\in \SS^{n-1}\}.
\end{equation*}
Define a function $\varphi:\SS^{n-1}\ra\R$ by $\varphi(\theta)=\Phi(r(\theta))$, where $\Phi(r)$ is a positive function satisfying $\Phi'=1/{\lambda}$. Let $\varphi_i,\varphi_{ij}$ be covariant derivatives of $\varphi$ with respect to $g_{\SS^{n-1}}$. Define $v=\sqrt{1+|\nabla\varphi|^2_{g_{\SS^{n-1}}}}$. Then the same calculation as in Proposition 5 in \cite{BHW} gives that the second fundamental form of $\Sigma$ has the expression
\begin{equation*}
    h_{ij}=\frac{\lambda'}{v\lambda}g_{ij}-\frac{\lambda}{v}\varphi_{ij},
\end{equation*}
where $g_{ij}$ is the induced metric on $\Sigma$ from $(M,\bar{g})$. At the maximum point $x$ of $\varphi$, we have $\varphi_i=0,\varphi_{ij}\leq 0$. Then we have $h_{ij}\geq \frac{\lambda'}{\lambda}g_{ij}$, i,e, $x$ is an elliptic point of $\Sigma$. Note that the maximum point $x$ of $\varphi$ is also a maximum point of $r$.
\end{rem}

Recall that for $1\leq k\leq n-1$ the convex cone $\Gamma^+_k\subset\R^{n-1}$ is defined by
\begin{equation*}
    \Gamma^+_k=\{\vec{\kappa}\in\R^{n-1}|\sigma_{j}(\vec{\kappa})>0\textrm{ for }j=1,\cdots,k\},
\end{equation*}
or equivalently
\begin{equation*}
    \Gamma^+_k=\textrm{component of }\{\sigma_k>0\}\textrm{ containing the positive cone}.
\end{equation*}
It is clearly that $\Gamma_k^+$ is a cone with vertex at the origin and $\Gamma_k^+\subset\Gamma_j^+$ for $j\leq k$. We write $\sigma_0=1$, $\sigma_k=0$ for $k>n-1$, and denote $\sigma_{k;i}(\vec{\kappa})=\sigma_k(\vec{\kappa})|_{\kappa_i=0}$, i.e. $\sigma_{k;i}(\vec{\kappa})$ is the $k$-the elementary symmetric polynomial of $(\kappa_1,\cdots,\kappa_{i-1},\kappa_{i+1},\\\cdots,\kappa_{n-1})$. Then we have the following classical result (see, e.g, \cite[Lemma 2.3]{STW},\cite{CNS,LT,HLMG,Ga}).
\begin{lem}\label{lem-3}
If $\vec{\kappa}\in\Gamma_k^+$, then $\sigma_{k-1;i}(\vec{\kappa})>0$ for each $1\leq i\leq n-1$ and
\begin{equation}\label{inequ}
    \sigma_{j-1}\geq \frac j{n-j}\begin{pmatrix}n-1\\j\end{pmatrix}^{1/j}\sigma_j^{(j-1)/j},\textrm{ for }1\leq j\leq k.
\end{equation}
\end{lem}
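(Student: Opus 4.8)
The plan is to prove the two assertions separately, using throughout the elementary identities
\begin{equation*}
\sigma_k(\vec{\kappa})=\sigma_{k;i}(\vec{\kappa})+\kappa_i\,\sigma_{k-1;i}(\vec{\kappa}),\qquad \f{\pt\sigma_k}{\pt\kappa_i}=\sigma_{k-1;i}(\vec{\kappa}),
\end{equation*}
together with the summation relations $\sum_i\sigma_{k-1;i}=(n-k)\sigma_{k-1}$ and $\sum_i\kappa_i\sigma_{k-1;i}=k\sigma_k$, and the structural facts about the G\aa rding cone: that $\Gamma_k^+$ is an open convex cone, that it coincides with $\{\sigma_1>0,\dots,\sigma_k>0\}$, and that $\sigma_k$ is hyperbolic with respect to the direction $(1,\dots,1)$ with associated G\aa rding cone exactly $\Gamma_k^+$.

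For the first assertion I would invoke G\aa rding's theory of hyperbolic polynomials. Since $e_i$ lies in the closure $\overline{\Gamma_k^+}$ (being a limit of strictly positive vectors $e_i+\varepsilon(1,\dots,1)$), the positivity of directional derivatives on the G\aa rding cone, which follows from concavity and homogeneity of $\sigma_k^{1/k}$, gives $\sigma_{k-1;i}=\pt_{e_i}\sigma_k\ge 0$ on $\Gamma_k^+$. The strict inequality then follows by identifying $\sigma_{k-1;i}(\vec{\kappa})$ with $\sigma_{k-1}$ of the reduced vector $\vec{\kappa}^{(i)}=(\kappa_1,\dots,\widehat{\kappa_i},\dots,\kappa_{n-1})$ and checking that $\vec{\kappa}^{(i)}\in\Gamma_{k-1}^+$, whence $\sigma_{k-1}(\vec{\kappa}^{(i)})>0$ by definition of that cone. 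Equivalently, one may argue by induction on $k$: the case $k=1$ is trivial since $\sigma_{0;i}=1$, and for the inductive step one orders $\kappa_1\ge\cdots\ge\kappa_{n-1}$, uses the two-index identity $\sigma_{k-1;a}-\sigma_{k-1;b}=(\kappa_b-\kappa_a)\sigma_{k-2;ab}$ to see that $\sigma_{k-1;i}$ is smallest when the largest curvature is deleted, and then reduces to the positivity of $\sigma_{k-1}$ on $(\kappa_2,\dots,\kappa_{n-1})\in\Gamma_{k-1}^+$.

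The chain of inequalities \eqref{inequ} I would obtain from Newton's inequalities. Setting $p_j=\sigma_j/\binom{n-1}{j}$, the Newton inequalities $p_{j-1}p_{j+1}\le p_j^2$ hold for every real $\vec{\kappa}$, since they express a property of the real-rooted polynomial $\prod_i(t+\kappa_i)$ and its successive derivatives via Rolle's theorem. On $\Gamma_k^+$ we have $p_0,\dots,p_k>0$ by the very definition of the cone, so the ratios $p_j/p_{j-1}$ are well defined and, by Newton, non-increasing in $j$; telescoping these ratios and comparing geometric means yields the MacLaurin inequality $p_{j-1}^{1/(j-1)}\ge p_j^{1/j}$, i.e. $p_{j-1}\ge p_j^{(j-1)/j}$, for $1\le j\le k$. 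Substituting $p_{j-1}=\sigma_{j-1}/\binom{n-1}{j-1}$ and $p_j=\sigma_j/\binom{n-1}{j}$ and using the identity $\binom{n-1}{j-1}=\f{j}{n-j}\binom{n-1}{j}$ rearranges this precisely into \eqref{inequ}.

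The main obstacle is the first assertion. The inequalities \eqref{inequ} are essentially bookkeeping once the positivity of $\sigma_1,\dots,\sigma_k$ is granted, but the positivity $\sigma_{k-1;i}>0$ throughout $\Gamma_k^+$ is genuinely delicate, precisely because $\Gamma_k^+$ strictly contains the positive cone and some of the $\kappa_i$ may be negative there. Handling it cleanly requires either the convexity and directional-derivative properties of G\aa rding cones or the two-index induction sketched above, where the only real work is verifying that deleting the largest component keeps the reduced vector in $\Gamma_{k-1}^+$.
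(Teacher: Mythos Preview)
The paper does not prove this lemma; it records it as a classical fact and refers to \cite{STW,CNS,LT,HLMG,Ga}. So there is no ``paper's proof'' to compare against, and your derivation of \eqref{inequ} from Newton's inequalities and the MacLaurin chain is correct and is exactly the argument one finds in those references.

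The first assertion, however, is not fully established in your sketch. In your G\aa rding approach you correctly get $\sigma_{k-1;i}\ge 0$ from $e_i\in\overline{\Gamma_k^+}$, but then upgrade to strict positivity by ``checking that $\vec{\kappa}^{(i)}\in\Gamma_{k-1}^+$''. That membership means precisely $\sigma_{j;i}(\vec{\kappa})>0$ for every $j\le k-1$, so in particular it \emph{contains} the conclusion $\sigma_{k-1;i}>0$; invoking it is circular. The induction sketch has the same issue one level down: to deduce that $\sigma_{k-1;i}$ is minimized at $i=1$ from the identity $\sigma_{k-1;1}-\sigma_{k-1;b}=(\kappa_b-\kappa_1)\sigma_{k-2;1b}$ you need $\sigma_{k-2;1b}\ge 0$, which is a \emph{two}-index deletion. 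The inductive hypothesis at level $k-1$ only gives the one-index statement $\sigma_{k-2;j}(\vec{\kappa})>0$; passing to $\sigma_{k-2;1b}>0$ would require knowing $\vec{\kappa}^{(1)}\in\Gamma_{k-1}^+$, which is again what you are trying to prove. You yourself flag this as ``the only real work'', and indeed it is: one standard way to close the loop (as in \cite{CNS,LT}) is a connectedness argument on $\Gamma_k^+$ showing that if $\sigma_{k-1;i}=0$ at some point of the cone then, via Newton's inequalities on the reduced vector, one forces $\sigma_{k;i}\le 0$ and hence $\sigma_k=\sigma_{k;i}+\kappa_i\sigma_{k-1;i}=\sigma_{k;i}\le 0$, a contradiction. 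Without some such step your argument for $\sigma_{k-1;i}>0$ is incomplete.
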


\vskip 2mm
The following Lemma shows that on connected closed hypersurface in $(M,\bar{g})$, the positiveness of $\sigma_p$ implies that the principal curvatures $\vec{\kappa}\in\Gamma_p^+$.
\begin{lem}\label{lem-2}
Let $\Sigma$ be a connected, closed hypersurface in $(M,\bar{g})$. If $\sigma_p>0$ on $\Sigma$, then we have $\sigma_j>0$ on $\Sigma$ for each $1\leq j\leq p-1$.
\end{lem}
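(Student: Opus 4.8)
The plan is to combine the elliptic point produced by Lemma~\ref{lem-1} with the purely pointwise fact that $\Gamma_p^+$ is, by definition, a connected component of the open set $\{\sigma_p>0\}\subset\R^{n-1}$. Since $\Sigma$ is connected and $\sigma_p>0$ everywhere on $\Sigma$, the assignment $y\mapsto\vec\kappa(y)$ sends $\Sigma$ continuously into $\{\sigma_p>0\}$, and by Lemma~\ref{lem-1} its image meets the distinguished component $\Gamma_p^+$. A clopen-subset argument will then force the whole image into $\Gamma_p^+$, and membership in $\Gamma_p^+$ says precisely that $\sigma_1,\dots,\sigma_p>0$; note that a pointwise argument cannot work, since $\sigma_p(y)>0$ alone does not imply $\sigma_j(y)>0$ for $j<p$ (e.g.\ all $\kappa_i<0$ with $p=n-1$ even), so connectedness is essential.

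Concretely, I would first observe that at the elliptic point $x$ from Lemma~\ref{lem-1} all principal curvatures are positive, so $\vec\kappa(x)$ lies in the positive cone, hence in $\Gamma_p^+$. Next, set
\[
U=\{y\in\Sigma:\ \vec\kappa(y)\in\Gamma_p^+\}=\{y\in\Sigma:\ \sigma_j(y)>0\ \text{for all }1\le j\le p\},
\]
the second description being legitimate because $\Gamma_p^+$ is invariant under permutations of the $\kappa_i$. Then $U$ is nonempty ($x\in U$) and open, since each $\sigma_j$ is a smooth function on $\Sigma$. To see that $U$ is also closed, note that the unordered tuple $\vec\kappa(\cdot)$ (equivalently $(\sigma_1,\dots,\sigma_{n-1})$) depends continuously on the point of $\Sigma$, so $\vec\kappa(\overline U)\subseteq\overline{\vec\kappa(U)}\subseteq\overline{\Gamma_p^+}$. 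On the other hand, since $\Gamma_p^+$ is a connected component of the open set $\{\sigma_p>0\}$ and $\R^{n-1}$ is locally connected, $\Gamma_p^+$ is open and $\partial\Gamma_p^+\subseteq\{\sigma_p=0\}$: a point of $\overline{\Gamma_p^+}$ at which $\sigma_p>0$ has a connected neighbourhood contained in $\{\sigma_p>0\}$ and meeting $\Gamma_p^+$, hence contained in $\Gamma_p^+$, so the point already lies in $\Gamma_p^+$. Using the hypothesis $\sigma_p>0$ on $\Sigma$, it follows that $\vec\kappa(\overline U)$ avoids $\partial\Gamma_p^+$, so $\vec\kappa(\overline U)\subseteq\Gamma_p^+$, i.e.\ $\overline U\subseteq U$. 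Thus $U$ is a nonempty clopen subset of the connected manifold $\Sigma$, hence $U=\Sigma$, which is exactly the assertion $\sigma_j>0$ on $\Sigma$ for all $1\le j\le p$, and in particular for $1\le j\le p-1$.

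The only delicate points are bookkeeping ones. First, ``$\vec\kappa$ depends continuously on the point'' must be read through the symmetric functions (or in the quotient $\R^{n-1}/S_{n-1}$) to avoid the usual ordering ambiguity of eigenvalues; this is harmless because everything in sight ($\Gamma_p^+$, its closure, $\sigma_p$) is symmetric. Second, the inclusion $\partial\Gamma_p^+\subseteq\{\sigma_p=0\}$ is what makes the ``closed'' step run, and I would prove it by the elementary local-connectedness argument above rather than by invoking a formula for $\overline{\Gamma_p^+}$. Beyond these, there is no real obstacle: the substantive input is Lemma~\ref{lem-1}, which pins the connected set $\vec\kappa(\Sigma)$ to the correct component of $\{\sigma_p>0\}$.
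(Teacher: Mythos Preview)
Your argument is correct, and it reaches the same conclusion by a genuinely different route from the paper's proof. The paper works on $\Sigma$ itself: it defines, for each $j$, the connected component $\mathcal{G}_j\subset\Sigma$ of $\{\sigma_j>0\}$ containing the elliptic point, uses the Newton--Maclaurin inequality~\eqref{inequ} of Lemma~\ref{lem-3} to prove the nesting $\mathcal{G}_{j+1}\subset\mathcal{G}_j$, and then uses~\eqref{inequ} once more (with $j=p$) to show $\mathcal{G}_{p-1}$ is closed. You instead work in the target $\R^{n-1}$: the single topological observation that $\Gamma_p^+$, being a connected component of the open set $\{\sigma_p>0\}$ in a locally connected space, has $\partial\Gamma_p^+\subset\{\sigma_p=0\}$ immediately yields closedness of $U=\vec\kappa^{-1}(\Gamma_p^+)$ under the hypothesis $\sigma_p>0$ on $\Sigma$. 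Your version is cleaner and bypasses Lemma~\ref{lem-3} entirely for this lemma; the trade-off is that it leans on the equivalence of the two descriptions of $\Gamma_p^+$ (as $\{\sigma_1,\dots,\sigma_p>0\}$ and as the component of $\{\sigma_p>0\}$ containing the positive cone), which the paper records but whose proof is itself the nontrivial G{\aa}rding-type input. Your handling of the eigenvalue-ordering issue via symmetric functions is adequate; alternatively one may simply order the $\kappa_i$ increasingly to get a continuous map $\Sigma\to\R^{n-1}$.
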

\proof
We believe that the proof of this Lemma can be found in literature, for example, see the proof of Proposition 3.2 in \cite{BC}. For convenience of the readers, we include the proof here. Lemma \ref{lem-1} implies that there exists an elliptic point $x$ on $\Sigma$. By continuity there exists an open neighborhood $\mathcal{U}$ around $x$ such that the principal curvatures are positive in $\mathcal{U}$. Hence $\sigma_k$ are positive in $\mathcal{U}$ for each $1\leq k\leq n-1$. Denote by $\mathcal{G}_j$ the connected component of the set $\{x\in \Sigma:\sigma_{j}|_x>0\}$ containing $\mathcal{U}$.

\begin{claim}\label{claim} For each $j$, we have $\mathcal{G}_{j+1}\subset \mathcal{G}_j$.
\end{claim}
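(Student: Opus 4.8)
The plan is to prove Claim \ref{claim} by showing that each $\mathcal{G}_{j+1}$ is both open and closed as a subset of $\mathcal{G}_j$, and then invoke connectedness of $\mathcal{G}_j$. Openness is immediate from continuity of $\sigma_j$ and $\sigma_{j+1}$. The real content is showing that $\mathcal{G}_{j+1}$ is relatively closed in $\mathcal{G}_j$: if $x_k \in \mathcal{G}_{j+1}$ and $x_k \to x_\infty \in \mathcal{G}_j$, I want to conclude $\sigma_{j+1}(x_\infty) > 0$ rather than merely $\geq 0$. To see this, note that on $\mathcal{G}_{j+1}$ the principal curvature vector $\vec\kappa$ lies in $\Gamma^+_{j+1}$ (since $\mathcal{G}_{j+1}$ contains $\mathcal{U}$, where $\vec\kappa$ is in the positive cone, and $\sigma_1, \ldots, \sigma_{j+1}$ are all positive along the connected set $\mathcal{G}_{j+1}$ by the inductive structure $\mathcal{G}_{j+1} \subset \mathcal{G}_j \subset \cdots \subset \mathcal{G}_1$). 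Hence by Lemma \ref{lem-3}, at each $x_k$ we have $\sigma_j(x_k) \geq c_j\, \sigma_{j+1}(x_k)^{j/(j+1)}$ for the explicit constant $c_j = \frac{j+1}{n-j-1}\binom{n-1}{j+1}^{1/(j+1)}$; equivalently $\sigma_{j+1}(x_k) \leq C_j\, \sigma_j(x_k)^{(j+1)/j}$. This inequality passes to the limit and shows nothing by itself, so instead I use the \emph{reverse} direction: the point is that the closure of $\mathcal{G}_{j+1}$ within $\mathcal{G}_j$ cannot meet the boundary $\{\sigma_{j+1} = 0\}$, because at such a boundary point $x_\infty$ we would have $\vec\kappa(x_\infty) \in \overline{\Gamma^+_{j+1}} \cap \{\sigma_{j+1} = 0\} = \partial \Gamma^+_{j+1}$, and there Lemma \ref{lem-3} applied on the limiting cone forces $\sigma_j(x_\infty)$ to still be positive while $\sigma_{j+1}$ vanishes --- but on $\mathcal{G}_j$ we must stay in $\Gamma^+_j$, and a standard fact about Gårding cones is that $\partial\Gamma^+_{j+1} \cap \Gamma^+_j$ consists of vectors where $\sigma_{j+1}$ can only vanish at interior-type degeneracies; more cleanly, one observes that $\partial_{\kappa_i}\sigma_{j+1} = \sigma_{j;i} > 0$ on $\overline{\Gamma^+_{j+1}}\setminus\{0\}$ so $\sigma_{j+1}$ has no interior critical points on $\Gamma^+_j$, hence $\{\sigma_{j+1} = 0\}$ cannot be crossed from inside $\mathcal{G}_{j+1}$ while staying in $\mathcal{G}_j$.

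A cleaner way to package this, which I would actually write up: argue that $\mathcal{G}_{j+1}$ is relatively closed in $\mathcal{G}_j$ directly. Take $x_\infty \in \mathcal{G}_j \cap \partial_{\mathcal{G}_j}\mathcal{G}_{j+1}$; then $\sigma_{j+1}(x_\infty) = 0$ and $\sigma_j(x_\infty) > 0$ (as $x_\infty \in \mathcal{G}_j \subset \{\sigma_j > 0\}$). By continuity and the fact that $\vec\kappa(x_\infty)$ is the limit of vectors in $\Gamma^+_{j+1}$, we get $\vec\kappa(x_\infty) \in \overline{\Gamma^+_{j+1}}$. But then $\vec\kappa(x_\infty) \in \overline{\Gamma^+_{j+1}} \cap \{\sigma_{j+1}=0\}$, so by the same inequality \eqref{inequ} (which holds on the closed cone by continuity) together with $\sigma_j(x_\infty) > 0$, we still need to rule this out --- and here I use that for $\vec\kappa \in \partial\Gamma^+_{j+1}$ one has $\sigma_{j-1}(\vec\kappa) > 0$ actually isn't enough either. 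The genuinely correct mechanism is: $\mathcal{G}_{j+1}$ open in $\Sigma$ means its topological boundary $\partial \mathcal{G}_{j+1}$ lies in $\{\sigma_{j+1} = 0\}$; if this boundary met $\mathcal{G}_j$ at a point $x_\infty$, then since $\mathcal{G}_j \subset \Gamma^+_j$-region, $\vec\kappa(x_\infty) \in \overline{\Gamma^+_{j+1}} \cap \Gamma^+_j$, and one checks $\overline{\Gamma^+_{j+1}} \cap \Gamma^+_j = \Gamma^+_{j+1} \cup (\partial\Gamma^+_{j+1} \cap \Gamma^+_j)$; on the latter set $\sigma_{j+1} = 0$ but $\sigma_{j+1}$ has no zeros in the \emph{interior} of $\Gamma^+_j$ other than on $\partial\Gamma^+_{j+1}$, which is a set that does \emph{not} separate $\Gamma^+_j$ --- hence $\mathcal{G}_{j+1}$, being a connected component of $\{\sigma_{j+1}>0\}$ that contains a point of $\Gamma^+_j$, exhausts all of $\Gamma^+_j \cap \{\sigma_{j+1}>0\}$'s preimage locally, forcing $\overline{\mathcal{G}_{j+1}} \cap \mathcal{G}_j = \mathcal{G}_{j+1}$.

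Once Claim \ref{claim} is in hand, the chain $\mathcal{G}_{p} \subset \mathcal{G}_{p-1} \subset \cdots \subset \mathcal{G}_1$ shows $\mathcal{G}_p \subset \{\sigma_j > 0\}$ for every $1 \leq j \leq p-1$; and since by hypothesis $\sigma_p > 0$ everywhere on $\Sigma$, the set $\{\sigma_p > 0\} = \Sigma$ is connected, so its connected component $\mathcal{G}_p$ containing $\mathcal{U}$ is all of $\Sigma$. Therefore $\sigma_j > 0$ on all of $\Sigma$ for $1 \leq j \leq p-1$, which is the statement of Lemma \ref{lem-2}. I expect the main obstacle to be the topological point in Claim \ref{claim}: making rigorous that the "bad boundary" $\{\sigma_{j+1} = 0\} \cap \{\sigma_j > 0\}$ cannot be approached from within the distinguished component $\mathcal{G}_{j+1}$ while staying inside $\mathcal{G}_j$ --- this rests on the convexity/monotonicity properties of the Gårding cones $\Gamma^+_k$ (specifically that $\sigma_{j+1}$ is positive and has positive first derivatives $\sigma_{j;i}$ on the closure of $\Gamma^+_{j+1}$ minus the origin), and care is needed to handle the case where $\vec\kappa$ approaches the origin. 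The rest of the argument (combining with connectedness of $\Sigma$) is routine.
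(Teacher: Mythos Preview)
Your connectedness argument is set up in the wrong direction, and this is why it never closes. You aim to show that $\mathcal{G}_{j+1}$ is relatively closed in $\mathcal{G}_j$: given $x_k\in\mathcal{G}_{j+1}$ with $x_k\to x_\infty\in\mathcal{G}_j$, you want $\sigma_{j+1}(x_\infty)>0$. But even if that succeeded, connectedness of $\mathcal{G}_j$ would only yield $\mathcal{G}_j\subset\mathcal{G}_{j+1}$, the \emph{reverse} of the claim. And, as you yourself observe, inequality \eqref{inequ} is useless in this direction: it reads $\sigma_j\ge C\,\sigma_{j+1}^{j/(j+1)}$, which at the limit bounds $\sigma_j$ from below by something that may vanish and says nothing about $\sigma_{j+1}$. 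None of the subsequent G{\aa}rding-cone manoeuvres you sketch produce a complete argument. There is also a circularity: to place $\vec\kappa\in\Gamma^+_{j+1}$ on all of $\mathcal{G}_{j+1}$ you invoke ``the inductive structure $\mathcal{G}_{j+1}\subset\mathcal{G}_j\subset\cdots$'', which is precisely the statement being proved.

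The repair is simply to swap the roles of $j$ and $j+1$. Arguing inductively, suppose $\mathcal{G}_j\subset\mathcal{G}_{j-1}\subset\cdots\subset\mathcal{G}_1$ is already known, and set $W=\mathcal{G}_j\cap\mathcal{G}_{j+1}$; this is open and nonempty (it contains $\mathcal U$). On $W$ one has $\vec\kappa\in\Gamma^+_{j+1}$, so \eqref{inequ} gives $\sigma_j\ge C\,\sigma_{j+1}^{j/(j+1)}$ there. Now take $x_\infty\in\mathcal{G}_{j+1}$ lying in the closure of $W$: passing to the limit and using $\sigma_{j+1}(x_\infty)>0$ forces $\sigma_j(x_\infty)>0$, whence $x_\infty\in\mathcal{G}_j$ and $x_\infty\in W$. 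Thus $W$ is closed in $\mathcal{G}_{j+1}$, and connectedness of $\mathcal{G}_{j+1}$ (not $\mathcal{G}_j$) gives $W=\mathcal{G}_{j+1}$, i.e.\ $\mathcal{G}_{j+1}\subset\mathcal{G}_j$. The paper's proof is exactly this mechanism, packaged non-inductively: it sets $\mathcal V_k=\bigcap_{j=1}^k\mathcal G_j$, uses the cascade of inequalities \eqref{inequ} to show that no boundary point of $\mathcal V_k$ can lie in $\mathcal G_k$, and concludes $\mathcal V_k=\mathcal G_k$ from connectedness of $\mathcal G_k$.
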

\begin{proof}[Proof of the Claim]
For each $k$, define the open set
\begin{equation*}
    \mathcal{V}_k=\bigcap_{j=1}^k\mathcal{G}_j.
\end{equation*}
It suffices to show that $\mathcal{V}_k=\mathcal{G}_k$. Since $\sigma_{j}>0$ in $\mathcal{V}_k$ for $1\leq j\leq k$, Lemma \ref{lem-3} implies that at each point of this open set $\mathcal{V}_k$ the inequalities \eqref{inequ} hold. By continuity \eqref{inequ} also hold at the boundary of $\mathcal{V}_k$. If a point $y$ of the boundary of $\mathcal{V}_k$ belongs to $\mathcal{G}_k$, then \eqref{inequ} implies $y\in \mathcal{G}_j$ for each $j\leq k$ and therefore belongs to $\mathcal{V}_k$. This shows that the boundary of $\mathcal{V}_k$ is contained in the boundary of $\mathcal{G}_k$. Since by definition $\mathcal{V}_k\subset\mathcal{G}_k$ and they are both open sets, $\mathcal{G}_k$ is connected, we have $\mathcal{V}_k=\mathcal{G}_k$. This completes the proof of the Claim.
\end{proof}

Now we continue the proof of Lemma \ref{lem-2}. We will show that $\mathcal{G}_{p-1}$ is closed. Pick a point $y$ at the boundary of $\mathcal{G}_{p-1}$. By continuity $\sigma_{p-1}\geq 0$ at $y$. Then Claim \ref{claim} implies that $\sigma_{j}\geq 0$ at $y$ for each $1\leq j\leq p-1$.
If $\sigma_{p-1}=0$ at $y$, by hypothesis $\sigma_p>0$ and using Lemma \ref{lem-3}, we have
\begin{equation*}
    0=\sigma_{p-1}\geq \frac p{n-p}\begin{pmatrix} n-1\\p\end{pmatrix}^{1/{p}}\sigma_{p}^{(p-1)/p}>0,
\end{equation*}
which is a contradiction. This implies $\sigma_{p-1}\neq 0$ at $y$, and $y$ belongs to the interior of $\mathcal{G}_{p-1}$. Therefore $\mathcal{G}_{p-1}$ is closed. Since it is also open, and then $\mathcal{G}_{p-1}=\Sigma$ by the connectedness of $\Sigma$. Then Claim \ref{claim} shows that $\mathcal{G}_j=\Sigma$ for each $1\leq j\leq p-1$, this implies $\sigma_{j}>0$ for $1\leq j\leq p-1$ on $\Sigma$ and completes the proof of Lemma \ref{lem-2}.
\endproof

Now we can prove Theorem \ref{thm-2}. As in \cite{BE}, it suffices to prove the Heintze-Karcher-type inequality and Minkowski-type inequality.

If $\sigma_p$ is a constant on $\Sigma$, then Lemma \ref{lem-1} implies $\sigma_p=const>0$.
Denote by $\vec{\kappa}=(\kappa_1,\cdots,\kappa_{n-1})$ the principal curvatures of $\Sigma$. Then Lemma \ref{lem-2} implies
$\vec{\kappa}\in\Gamma^+_p$ on $\Sigma$. Thus $\vec{\kappa}\in \Gamma^+_1$ and $\Sigma$ is mean convex.
So the Heintze-Karcher-type inequality
\begin{align}\label{eq-HK}
(n-1)\int_{\Sigma}\frac {\lambda'}{H}\geq& \int_{\Sigma}\langle X,\nu\rangle
\end{align}
can be obtained as in \cite{B}.

On the other hand, we can prove
\begin{prop}[Minkowski-type inequality]
For $1\leq p\leq n-1$, suppose that $\Sigma$ is star-shaped and $\sigma_p>0$. Then
 \begin{align}\label{eq-M}
 p\int_\Sigma\langle X,\nu\rangle \sigma_p \geq (n-p)\int_\Sigma \lambda' \sigma_{p-1}
 \end{align}
\end{prop}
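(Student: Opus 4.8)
The plan is to test the conformal and Codazzi structure of $\Sigma$ against its $(p-1)$‑st Newton tensor and integrate by parts. Since $\sigma_p>0$ on the connected closed hypersurface $\Sigma$, Lemma~\ref{lem-2} upgrades this to $\sigma_1,\dots,\sigma_{p-1}>0$, i.e. $\vec\kappa\in\Gamma^+_p$ everywhere on $\Sigma$ (this positivity is exactly what takes over the role of the convexity hypothesis of \cite{BE}). Write $T_{p-1}=\frac{\pt\sigma_p}{\pt h}$ and $T_{p-2}=\frac{\pt\sigma_{p-1}}{\pt h}$ for the corresponding Newton tensors, with the convention $T_{-1}:=0$. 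They are simultaneously diagonalizable with $h$, and in principal coordinates their diagonal entries are $\sigma_{p-1;i}(\vec\kappa)$ and $\sigma_{p-2;i}(\vec\kappa)$; hence by Lemma~\ref{lem-3} (applied with $\vec\kappa\in\Gamma^+_p\subset\Gamma^+_{p-1}$) the tensor $T_{p-2}$ is positive semidefinite. We will also use the algebraic identities $\operatorname{tr}_g T_{p-1}=(n-p)\,\sigma_{p-1}$ and $T_{p-1}^{ij}h_{ij}=p\,\sigma_p$.

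First I would record, exactly as in \eqref{eq-2}, that the tangential part $X^{\top}$ of the conformal vector field $X=\lambda\pt_r$ satisfies $\nabla_i(X^{\top})_j=\lambda'\,g_{ij}-\langle X,\nu\rangle\,h_{ij}$ on $\Sigma$. Contracting with $T_{p-1}$ and using the two trace identities gives the pointwise equality
\begin{equation*}
T_{p-1}^{ij}\nabla_i(X^{\top})_j=(n-p)\,\lambda'\,\sigma_{p-1}-p\,\langle X,\nu\rangle\,\sigma_p .
\end{equation*}
Integrating over $\Sigma$ and moving the derivative onto $T_{p-1}$ by the divergence theorem (the total divergence of the vector field $T_{p-1}^{ij}(X^{\top})_j$ has vanishing integral on the closed $\Sigma$), this becomes
\begin{equation*}
p\int_\Sigma\langle X,\nu\rangle\,\sigma_p-(n-p)\int_\Sigma\lambda'\,\sigma_{p-1}=\int_\Sigma\langle\operatorname{div}_\Sigma T_{p-1},\,X^{\top}\rangle .
\end{equation*}
Thus \eqref{eq-M} reduces to showing $\langle\operatorname{div}_\Sigma T_{p-1},X^{\top}\rangle\ge 0$, and I expect this in fact holds pointwise.

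Next I would compute $\operatorname{div}_\Sigma T_{p-1}$. By the Codazzi equation of $\Sigma\subset(M,\bar g)$, this intrinsic divergence is a contraction of $T_{p-2}$ with the ambient curvature tensor carrying one normal entry, of the schematic form $(\operatorname{div}_\Sigma T_{p-1})_j=-(T_{p-2})^{ab}\,\bar R(e_a,\nu,e_b,e_j)$ (for $p=1$ this vanishes, as it must). Now $(M,\bar g)$ is a warped product whose curvature tensor splits as the sum of a radial part, of sectional curvature $-\lambda''/\lambda$, and a lifted round part, of sectional curvature $(1-\lambda'^2)/\lambda^2$. Feeding this into the contraction and pairing against $X^{\top}=\lambda\,\pt_r^{\top}$, the two parts collapse onto the same tensorial expression and, after checking signs, one is left with
\begin{equation*}
\langle\operatorname{div}_\Sigma T_{p-1},\,X^{\top}\rangle=\Bigl(\frac{\lambda''}{\lambda}+\frac{1-\lambda'^2}{\lambda^2}\Bigr)\lambda\,\langle\pt_r,\nu\rangle\Bigl(|\pt_r^{\top}|^2\,\operatorname{tr}_g T_{p-2}-T_{p-2}(\pt_r^{\top},\pt_r^{\top})\Bigr),
\end{equation*}
where $|\pt_r^{\top}|^2=1-\langle\pt_r,\nu\rangle^2$. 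Here $\lambda>0$; $\langle\pt_r,\nu\rangle\ge 0$ by star‑shapedness; the coefficient $\frac{\lambda''}{\lambda}+\frac{1-\lambda'^2}{\lambda^2}$ is positive by (H4); and the last parenthesis is non‑negative because, writing $T_{p-2}=\operatorname{diag}(a_1,\dots,a_{n-1})$ with $a_i\ge 0$ and $\pt_r^{\top}=(v_1,\dots,v_{n-1})$ in principal coordinates, it equals $\sum_i a_i\sum_{k\ne i}v_k^2$. Hence the integrand is $\ge 0$, which gives \eqref{eq-M} (an identity when $p=1$, where it is the classical Minkowski‑type formula).

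The step I expect to be the main obstacle is the third one: correctly deriving the divergence identity for $T_{p-1}$ in a general Riemannian manifold from the Codazzi equation, and then performing the contraction against the explicit warped‑product curvature carefully enough to see the collapse onto the single term above --- in particular, tracking signs so that (H4) enters with the right orientation and the elementary identity $|\pt_r^{\top}|^2\operatorname{tr}_g T_{p-2}-T_{p-2}(\pt_r^{\top},\pt_r^{\top})=\sum_i a_i\sum_{k\ne i}v_k^2$ closes the argument.
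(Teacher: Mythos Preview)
Your approach is essentially the paper's: contract $T_{p-1}$ against $\nabla X^{\top}$, integrate by parts to reduce \eqref{eq-M} to $\langle\operatorname{div}_\Sigma T_{p-1},X^{\top}\rangle\ge 0$, and then control this divergence term via the warped-product curvature, (H4), star-shapedness, and the positive-semidefiniteness of $T_{p-2}$ coming from $\vec\kappa\in\Gamma^+_p$. The one discrepancy is the explicit form of the divergence term. The paper, following Proposition~8 of \cite{BE}, obtains the Ricci contraction
\[
\sum_{i,j}\xi_j\nabla_i T^{(p)}_{ij}=-\frac{n-p}{n-2}\sum_j\sigma_{p-2;j}(\vec\kappa)\,\xi_j\,Ric(e_j,\nu),
\]
which in this warped product equals $(n-p)\bigl(\frac{\lambda''}{\lambda}+\frac{1-\lambda'^2}{\lambda^2}\bigr)\lambda\langle\pt_r,\nu\rangle\,T_{p-2}(\pt_r^{\top},\pt_r^{\top})$; that is, the quadratic form is $T_{p-2}(\pt_r^{\top},\pt_r^{\top})$ itself, not your $|\pt_r^{\top}|^2\operatorname{tr}T_{p-2}-T_{p-2}(\pt_r^{\top},\pt_r^{\top})$. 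The two expressions coincide for $p\le 2$ but differ for $p\ge 3$ (e.g.\ $n=4$, $p=3$, $\vec\kappa=(1,2,3)$, $\pt_r^{\top}\propto e_1$), so the curvature contraction you flagged as the obstacle does need the careful re-derivation you anticipated. Since the correct expression is likewise a non-negative quadratic form in the positive-semidefinite $T_{p-2}$, the argument closes in exactly the same way.
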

\begin{proof}
Let $\xi=X-\langle X,\nu\rangle \nu$ and $T_{ij}^{(p)}=\frac{\pt\sigma_p}{\pt h_{ij}}$. Then $$\nabla_i\xi_j=\bar{\nabla}_iX_j-\langle X,\nu\rangle h_{ij}=\lambda'\bar{g}_{ij}-\langle X,\nu\rangle h_{ij}$$ Therefore
\begin{eqnarray}
\sum_{i,j=1}^{n-1} \nabla_i(\xi_j T_{ij}^{(p)})&=&\lambda' \sum^{n-1}_{i=1} T_{ii}^{(p)}-\sum^{n-1}_{i,j=1}T_{ij}^{(p)}\langle X,\nu\rangle h_{ij}+\sum^{n-1}_{i,j=1}\xi_j\nabla_i T^{(p)}_{ij}\nonumber \\
&=&\lambda'(n-p)\sigma_{p-1}-p\sigma_p\langle X,\nu\rangle+\sum^{n-1}_{i,j=1}\xi_j\nabla_i T^{(p)}_{ij}\label{eq-4}
\end{eqnarray}
Next as the proof of Proposition $8$ in \cite{BE}, we can get
$$\sum^{n-1}_{i,j=1}\xi_j\nabla_i T^{(p)}_{ij}=-\frac{n-p}{n-2}\sum^{n-1}_{j=1}\sigma_{p-2;j}(\vec{\kappa})\xi_j Ric(e_j,\nu)$$
By direct calculation, we have
\begin{align*}
Ric(e_j,\nu)=&-(n-2)\left(\f{\lambda''(r)}{\lambda(r)}+\f{1-\lambda'(r)^2}{\lambda(r)^2}\right)\frac{\xi_j}{\lambda}\langle\pt_r,\nu\rangle.
\end{align*}
Thus, using the assumption
``star-shaped'' $\langle \pt_r,\nu\rangle\geq 0$ and the condition (H4), we have $\xi_j Ric(e_j,\nu)\leq 0$ for each $1\leq j\leq n-1$. On the other hand, from Lemma \ref{lem-2} and Lemma \ref{lem-3}, $\vec{\kappa}\in\Gamma^+_{p-1}$ on $\Sigma$ and  $\sigma_{p-2;j}(\vec{\kappa})>0$ for each $1\leq j\leq n-1$. Therefore we have
\begin{equation}\label{eq-5}
    \sum^{n-1}_{i,j=1}\xi_j\nabla_i T^{(p)}_{ij}\geq 0.
\end{equation}
Putting \eqref{eq-5} into \eqref{eq-4} and integrating on $\Sigma$, we get the Proposition 8.
\end{proof}

Once obtaining the Heintze-Karcher-type inequality \eqref{eq-HK} and the Minkowski-type inequality \eqref{eq-M}, we can go through the remaining proof as in \cite{BE}, which completes the proof of Theorem \ref{thm-2}.

\appendix
\section{Further remark}
Finally we give a remark about the generalization of Theorem \ref{thm-2}. For $n\geq 3$, let $(N,g_N)$ be a compact Einstein manifold of dimension $n-1$ satisfying $Ric_N=(n-2)Bg_N$ for some constant $B$. Moreover, let $\lambda:[0,\bar{r})\ra \R$ be a smooth positive function which satisfies the following conditions:
\begin{itemize}
  \item[$\textrm{(H1)}^\prime$] $\lambda'(0)=0$ and $\lambda''(0)>0$.
  \item[$\textrm{(H2)}^\prime$] $\lambda'(r)>0$ for all $r\in(0,\bar{r})$.
  \item[$\textrm{(H3)}^\prime$] The function $$2\f{\lambda''(r)}{\lambda(r)}-(n-2)\f{B-\lambda'(r)^2}{\lambda(r)^2}$$ is non-decreasing for $r\in(0,\bar{r})$.
  \item[$\textrm{(H4)}^\prime$] $\f{\lambda''(r)}{\lambda(r)}+\f{B-\lambda'(r)^2}{\lambda(r)^2}>0$ for all $r\in (0,\bar{r})$.
\end{itemize}
Let  manifold $M=N\times [0,\bar{r})$   with a Riemannian metric $\bar{g}=dr\otimes dr+\lambda(r)^2 g_N$. By use of the similar arguments as proof of Theorem 2, we can obtain the following  generalization of Theorem 2
\begin{thm}
Let $\Sigma$ be a closed, embedded and star-shaped hypersurface in the manifold $(M,\bar{g})$. If $\sigma_p$ is constant, then $\Sigma$ is a slice $N\times \{r\}$ for some $r\in (0,\bar{r})$.
\end{thm}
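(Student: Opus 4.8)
The plan is to transcribe the proof of Theorem \ref{thm-2}, replacing $\SS^{n-1}$ by the Einstein manifold $(N,g_N)$ and tracking exactly where the constant $B$ enters. First I would revisit Lemma \ref{lem-1}. Setting $X=\lambda(r)\pt_r$ one still has $\bar\nabla X=\lambda'\bar g$, since this identity only uses the warped product structure $\bar g=dr\otimes dr+\lambda(r)^2 g_N$ and not the geometry of the base; hence $X$ is conformal and the Hessian identity \eqref{eq-3} holds verbatim. As $\Sigma$ is closed, the height function $h=\pi_I\circ\psi$ attains a maximum at some $x\in\Sigma$, where $\nabla h=0$, $\nu=\pt_r$ and $\nabla^2 h\leq 0$; plugging this into \eqref{eq-3} and using $\textrm{(H2)}'$ gives $\kappa_i\geq\lambda'/\lambda>0$ at $x$, so an elliptic point exists.

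Next, Lemma \ref{lem-3} is purely algebraic (it concerns the cones $\Gamma_k^+\subset\R^{n-1}$ and the Newton--Maclaurin inequalities) and is untouched. Consequently Lemma \ref{lem-2} goes through word for word: the elliptic point together with the continuity/connectedness argument through the nested sets $\mathcal G_j$ shows that if $\Sigma$ is connected and $\sigma_p>0$, then $\sigma_j>0$ for all $1\leq j\leq p-1$, so $\vec\kappa\in\Gamma_p^+$ on $\Sigma$. In particular, if $\sigma_p$ is constant then Lemma \ref{lem-1} forces $\sigma_p=\mathrm{const}>0$, and since $\vec\kappa\in\Gamma_1^+$ the hypersurface $\Sigma$ is mean convex.

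The two analytic ingredients are the Heintze--Karcher-type inequality and the Minkowski-type inequality. For the former, Brendle's proof of \eqref{eq-HK} in \cite{B} uses only the warped product structure together with conditions of the type $\textrm{(H1)}'$--$\textrm{(H4)}'$ on the warping function over an Einstein base, and no feature specific to the round sphere; hence \eqref{eq-HK} holds in the present setting. For the Minkowski inequality I would rerun the computation in the Proposition above: with $\xi=X-\langle X,\nu\rangle\nu$ and $T^{(p)}_{ij}=\pt\sigma_p/\pt h_{ij}$, the divergence identity \eqref{eq-4} is unchanged, and the Codazzi-type identity gives $\sum_{i,j}\xi_j\nabla_iT^{(p)}_{ij}=-\frac{n-p}{n-2}\sum_j\sigma_{p-2;j}(\vec\kappa)\,\xi_j\,Ric(e_j,\nu)$. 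The only place the base geometry enters is the ambient Ricci curvature of the warped product: a direct computation using $Ric_N=(n-2)Bg_N$ yields
\[
Ric(e_j,\nu)=-(n-2)\Bigl(\f{\lambda''(r)}{\lambda(r)}+\f{B-\lambda'(r)^2}{\lambda(r)^2}\Bigr)\f{\xi_j}{\lambda}\langle\pt_r,\nu\rangle,
\]
so that $\textrm{(H4)}'$ together with star-shapedness gives $\xi_j\,Ric(e_j,\nu)\leq 0$ for each $j$; since $\vec\kappa\in\Gamma_{p-1}^+$ makes $\sigma_{p-2;j}(\vec\kappa)>0$, we get the analogue of \eqref{eq-5} and hence \eqref{eq-M}. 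With both inequalities established, the concluding rigidity argument of \cite{BE} — combining the Heintze--Karcher and Minkowski inequalities with the monotonicity supplied by $\textrm{(H3)}'$ to force equality, whence umbilicity and then identification with a slice — applies without change, now producing a slice $N\times\{r\}$.

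I expect the main obstacle to be bookkeeping rather than a new idea: one must check carefully that the ambient curvature identities (the expression for $Ric(e_j,\nu)$ above, and in the rigidity step the second fundamental form, Ricci and scalar curvatures of the slices $N\times\{r\}$) come out with the constant $B$ in precisely the positions demanded by $\textrm{(H3)}'$--$\textrm{(H4)}'$, and that Brendle's Heintze--Karcher inequality in \cite{B} and the final rigidity argument in \cite{BE} were indeed carried out at the level of generality of an Einstein base, so that no property special to $\SS^{n-1}$ (such as constant sectional curvature) is tacitly invoked. Granting these verifications, the proof is a direct transcription of the argument for Theorem \ref{thm-2}.
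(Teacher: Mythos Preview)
Your proposal is correct and follows exactly the route the paper indicates: the authors give no separate argument for this theorem but simply state that ``by use of the similar arguments as proof of Theorem~\ref{thm-2}'' the result follows, and your write-up is a faithful elaboration of that transcription, correctly identifying the single substantive change (the constant $B$ in the ambient Ricci formula and in $\textrm{(H3)}'$--$\textrm{(H4)}'$). Your caveat about verifying that Brendle's Heintze--Karcher inequality in \cite{B} is stated for general Einstein bases is well placed and is indeed the only point requiring an external check; it holds.
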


%=============================================================

\bibliographystyle{Plain}

\begin{thebibliography}{10}
\bibitem{AC} L. J. Alias and A. G. Colares, {\it Uniqueness of spacelike hypersurfaces with constant higher order mean curvature in generalized Robertson-Walker spacetimes}, Math. Proc. Cambridge Philos. Soc., \textbf{143}(2007), 703-729.

\bibitem{B} S. Brendle, {\it Hypersurfaces of constant mean curvature in Desitter-Schwarzschild space}, arXiv:1105.4273.

\bibitem{BC} J. L. M. Barcosa and A. G. Colares, {\it Stability of Hypersurfaces with Constant r-Mean Curvature}, Annals of Global Analysis and Geometry \textbf{15}(1997), 277-297.

\bibitem{BE} S. Brendle and M. Eichmair, {\it Isoperimetric and Weingarten surfaces in the Schwarchild manifold}, arXiv:1208.3988, to appear in J. Diff. Geom.

\bibitem{BHW} S. Brendle, P.-K. Hung, and M.-T. Wang, {\it A Minkowski-type inequality for hypersurfaces in the Anti-deSitter-Schwarzschild manifold}, arXiv: 1209.0669.

\bibitem{CNS} L.A. Caffarelli, L. Nirenberg, and J. Spruck, {\it Dirichlet problem for nonlinear second order elliptic equations III, Functions of the eigenvalues of the Hessian}, Acta Math. \textbf{155}(1985), 261-301.

\bibitem{Ga} L. G{\aa}rding, {\it An inequality for hyperbolic polynomials}, J. Math. Mech. \textbf{8}(159),957-965.

\bibitem{HLMG} Y. Ge, H. Li, H. Ma and J. Ge, {\it Compact embedded hypersurfaces with constant higher order anisotropic mean curvatures}, Indiana Univ. Math. J. \textbf{58}(2009), 853-868.

\bibitem{LT} M. Lin and  N.S. Trudinger, {\it On some inequalities for elementary symmetric functions}, Bull. Aust. Math. Soc. \textbf{50}(1994), 317-326.

\bibitem{STW} W.-M. Sheng, N. S. Trudinger and X.-J. Wang, {\it The Yamabe problem for higher order curvatures}, J. Diff. Geom., \textbf{77}(2007), 515-553.
\end{thebibliography}

\end{document}